\documentclass[11pt,reqno]{amsart}
\usepackage{amsmath,amssymb,amsthm,mathtools}
\usepackage[margin=1.15in]{geometry}
\usepackage{booktabs}
\usepackage{listings}
\usepackage[colorlinks=true,linkcolor=blue,citecolor=blue,urlcolor=blue]{hyperref}

\theoremstyle{plain}
\newtheorem{theorem}{Theorem}[section]
\newtheorem{lemma}[theorem]{Lemma}
\newtheorem{proposition}[theorem]{Proposition}
\newtheorem{corollary}[theorem]{Corollary}
\theoremstyle{definition}
\newtheorem{question}[theorem]{Question}
\newtheorem{remark}[theorem]{Remark}

\newcommand{\FF}{\mathbb{F}}
\newcommand{\GL}{\mathrm{GL}}

\DeclareMathOperator{\Gam}{\Gamma}

\begin{document}

\title[Maximal connectivity graph-codes in every degree at least four]
{Maximal connectivity graph-codes exist in every degree at least four:\\
$f(d)=2^{d}$ for all $d\ge 4$, answering a question of Alon}

\author{Chenxiao Tian}
\address{Princeton University, Princeton, NJ 08544, USA}
\email{ct3471@alumni.princeton.edu}

\date{\today}

\subjclass[2020]{05C35, 05C40, 94B25, 05C50}
\keywords{graph-codes, connectivity code, symmetric difference, regular graphs, voltage graphs, finite fields}

\begin{abstract}
A family $\mathcal{C}$ of spanning subgraphs of a graph $H$ is a \emph{connectivity code} if the symmetric difference of any two distinct members is a connected spanning subgraph of $H$; let $m(H)$ be the largest size of such a family. If $H$ is $d$-regular then $m(H)\le 2^{d}$, and $f(d)$ denotes the largest value attained by $m(H)$ for infinitely many $d$-regular graphs $H$. Alon proved that $f(d)=2^{d}$ for every $d$ exceeding an absolute constant $d_{0}$, by showing that every $d$-regular mild expander attains the bound; he also showed $f(2)<2^{2}$ and $f(3)<2^{3}$, and asked whether $f(d)=2^{d}$ holds for all $d\ge 4$.

We answer this affirmatively: $f(d)=2^{d}$ for every $d\ge 4$. Together with the known values in degrees $2$ and $3$, this determines $f(d)$ for every $d$. The construction is elementary and uniform in $d$, and uses neither expansion nor the local lemma. The base graph is $K_{d,d}$, with edges labelled by the first $d$ iterates of an irreducible operator $T$ on $\FF_2^{d}$; for $d\ge 7$ a first-moment count over a single irreducible conjugacy class in $\GL_{d}(2)$ shows that $T$ may be chosen so that every nonzero linear combination of the labels selects a connected spanning subgraph containing a cycle, and exact certificates cover $4\le d\le 8$. Random cyclic voltage lifts then turn each base example into infinitely many pairwise nonisomorphic $d$-regular examples without decreasing the dimension of the code. Along the way we obtain $m(K_{d,d})=2^{d}$ for every $d\ge 4$.
\end{abstract}

\maketitle

\section{Introduction}

Let $H=(V,E)$ be a finite simple graph. Following Alon \cite{AlonConn}, a family $\mathcal{C}\subseteq 2^{E}$ of spanning subgraphs of $H$ is a \emph{connectivity code} for $H$ if $A\triangle B$ is a connected spanning subgraph of $H$ for all distinct $A,B\in\mathcal{C}$, and $m(H)$ denotes the maximum cardinality of a connectivity code for $H$. This is the natural graph-theoretic analogue of a binary code of large minimum distance: for subgraphs, ``far apart'' is interpreted as ``connected and spanning''. The code is called \emph{linear} if $\mathcal{C}$ is a subspace of $\FF_2^{E}$.

Since no two codewords may agree on the edge set of a nontrivial cut, $m(H)\le 2^{k'(H)}\le 2^{\delta(H)}$, where $k'(H)$ is the edge connectivity and $\delta(H)$ the minimum degree. Equality $m(K_{n})=2^{n-1}$ was established in \cite{AGKMS}, and $m(C_{3}\times C_{3})=m(C_{4}\times C_{4})=2^{4}$ in \cite{BGMW}. On the other hand, equality can fail badly: it is shown in \cite{AlonConn} that $m(K_{t}\times C_{s})\le 2^{t}$ for $s>(2t+1)2^{t-1}$, although $K_{t}\times C_{s}$ is $(t+1)$-regular and $(t+1)$-edge-connected.

For $d\ge 2$ let
\[
f(d)\ =\ \max\bigl\{\,q\ :\ m(H)=q \text{ for infinitely many pairwise nonisomorphic $d$-regular graphs } H \,\bigr\}.
\]
Thus $f(d)\le 2^{d}$ for every $d$. The main theorem of \cite{AlonConn} states that every $d$-regular graph in which each set $W$ of at most half the vertices sends at least $c|W|\log d$ edges to its complement satisfies $m(H)=2^{d}$; since such expanders exist in every sufficiently large degree, there is an absolute constant $d_{0}$ with $f(d)=2^{d}$ for all $d\ge d_{0}$. The proof, which uses the asymmetric Lovász Local Lemma, requires $d$ to be large (in the form given there, $d\ge 1000$), and Alon remarks explicitly that it would not yield small degrees even if the constants were optimised. In the opposite direction $f(2)=2<2^{2}$, and $f(3)=4<2^{3}$ by an application of the Plotkin bound \cite{Plotkin}. This left precisely the range $4\le d<d_{0}$ open, and prompted the following question, which is Question~7.1 of Alon's survey \cite{AlonSurvey}.

\begin{question}[Alon]\label{q:alon}
Is $f(d)=2^{d}$ for all $d\ge 4$?
\end{question}

We answer Question~\ref{q:alon} in the affirmative.

\begin{theorem}\label{thm:main}
For every integer $d\ge 4$ we have $f(d)=2^{d}$. More precisely, for every $d\ge 4$ there are infinitely many pairwise nonisomorphic finite simple $d$-regular bipartite graphs carrying a linear connectivity code of dimension $d$.
\end{theorem}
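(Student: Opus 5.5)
Since $m(H)\le 2^{\delta(H)}$, only the lower bound needs work. The plan has three stages: build a single base graph with a linear connectivity code of dimension $d$; lift it to infinitely many pairwise nonisomorphic $d$-regular bipartite graphs while keeping the code; and verify the base condition, which is the hard part.

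\textbf{Base graph and labelling.} Take $H_0=K_{d,d}$ with parts $\{u_1,\dots,u_d\}$ and $\{w_1,\dots,w_d\}$. Fix an irreducible $T\in\GL_d(2)$ and a nonzero $v\in\FF_2^d$. Label the edge $u_iw_j$ by $\ell(u_iw_j)=T^{i+j}v\in\FF_2^d$. For each $x$ in the dual space, $x\neq 0$, define the subgraph
\[
A_x=\{e:\langle x,\ell(e)\rangle=1\}.
\]
The map $x\mapsto A_x$ is linear, so the family $\{A_x\}$ is a linear code. Its dimension is $d$ provided the map is injective. Injectivity holds because $v,Tv,\dots,T^{d-1}v$ span $\FF_2^d$: $T$ is irreducible, so $v$ is a cyclic vector. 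For a linear code the condition $A_x\triangle A_y=A_{x+y}$ means it suffices that every $A_x$ with $x\neq0$ is connected and spanning.

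The map $i\mapsto \langle x,T^iv\rangle$ is a nonzero linear recurring sequence with irreducible characteristic polynomial $p$. Its window of length $2d-1$ gives a Hankel $\{0,1\}$-matrix, and $A_x$ is the bipartite graph whose biadjacency matrix is that Hankel matrix. To make $A_x$ spanning and connected, I would show the following.
\begin{itemize}
\item[(i)] No row or column of the Hankel matrix vanishes. A vanishing row would be $d$ consecutive zeros, which is impossible for a nonzero sequence of order $d$.
\item[(ii)] The bipartite graph is connected.
\end{itemize}
I would also require $A_x$ to contain a cycle, since the lifting stage needs it.

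\textbf{Lifting.} Take a cyclic voltage assignment $\phi:E(H_0)\to\mathbb{Z}_n$ and form the $n$-fold lift $\tilde H$. It is $d$-regular and bipartite. Pulling back the labels gives subgraphs $\tilde A_x$, and the pulled-back family is again a linear code of dimension $d$. The lift $\tilde A_x$ is connected if and only if $A_x$ is connected and the voltages of closed walks in $A_x$ generate $\mathbb{Z}_n$. There are only $2^d-1$ sets $A_x$, each containing a cycle. Choosing $\phi$ at random (or explicitly, e.g.\ voltage $1$ on one suitable edge per cycle) with $n$ prime, each $A_x$ gets a cycle of nonzero voltage with positive probability. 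A union bound over the $2^d-1$ sets fails, so instead I would fix one cycle $C_x$ per $x$. Each condition $\phi(C_x)\neq0$ is a nonzero linear form in $\phi$, so it fails with probability $1/n$. Taking $n>2^d$ leaves a good $\phi$. Distinct $n$ give distinct numbers of vertices $2dn$, hence infinitely many nonisomorphic graphs.

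\textbf{Main obstacle: connectivity of every Hankel graph.} I would handle this by a first-moment count over the conjugacy class of $T$. A disconnected spanning $A_x$ splits the rows into $R\sqcup R'$ and the columns into $C\sqcup C'$, with no edges of $A_x$ between $R$ and $C'$ or between $R'$ and $C$. This forces many specified entries of the sequence to vanish.
\begin{itemize}
\item Fix the vertex partition and the functional $x$, and count the irreducible $T$ (equivalently pairs $(p,x)$, i.e.\ sequences) satisfying the zero pattern.
\item There are about $2^{d}$ sequences per polynomial, and they are nearly uniform on constrained positions, since distinct shifts are linearly independent in windows of length $\le d$.
\item Roughly $d$ independent zero conditions cost a factor $2^{-d}$ or more. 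This must beat the $\sim 2^{2d}$ partitions times the $2^d$ choices of $x$, so some entropy bookkeeping is needed.
\end{itemize}
I expect this count to go through only for $d\ge7$. For $4\le d\le 8$ I would instead exhibit explicit primitive polynomials and check by computer that all $2^d-1$ Hankel graphs are connected and contain a cycle. The cycle condition is automatic once connected, since the number of edges exceeds $2d-1$ by the nonvanishing-rows argument. This certificate step is finite but essential.
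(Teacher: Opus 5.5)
\textbf{The construction fails at the base graph, for every $d$, every irreducible $T$ and every $v$.} Your labelling is symmetric: $\ell(u_iw_j)=\ell(u_jw_i)$. Hence for each $i$, the $d-1$ edges at $u_i$ other than $u_iw_i$ and the $d-1$ edges at $w_i$ other than $u_iw_i$ carry the same labels $\{T^{i+j}v: j\ne i\}$.

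The $d$ labels $T^{i+1}v,\dots,T^{i+d}v$ at $u_i$ form a basis, since $v$ is cyclic and $T$ is invertible. So there is a nonzero $x$ with $\langle x,T^{i+j}v\rangle=0$ for $j\ne i$ and $\langle x,T^{2i}v\rangle=1$. For this $x$, the edge $u_iw_i$ is an isolated component of $A_x$. In your Hankel language: row $1$ and column $1$ of every Hankel matrix are the same vector $(s_2,\dots,s_{d+1})$. Every nonzero initial state occurs in the sequence space, so some nonzero sequence has this vector equal to $(1,0,\dots,0)$, and its graph splits off a $K_2$.

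This is exactly the Hankel/companion construction that Remark~\ref{rem:companion} records as failing. Consequently your computer certificates for $4\le d\le 8$ cannot exist, and no first-moment count can succeed. The count over the conjugacy class is in fact vacuous. The map $x\mapsto(\langle x,T^kv\rangle)_{k\ge0}$ is a linear bijection from $W^*$ onto the $d$-dimensional space of sequences obeying the recurrence of the characteristic polynomial $p$ of $T$. So the family $\{A_x\}$ depends only on $p$, not on which conjugate $T$ or which $v$ you pick.

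\textbf{The missing idea is to break the row/column symmetry.} The paper labels $x_iy_j$ by $T^ie_j$, so the Krylov structure $a,aT,\dots,aT^{d-1}$ sits only in the rows, while the columns use a fixed basis. Conjugating $T$ then genuinely randomises the selected graph: $M_a(T)$ becomes uniform among invertible matrices with first row $a$, and the union bound over one conjugacy class works for $d\ge 7$.

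\textbf{A second error: the cycle condition is not automatic.} No zero row and no zero column give only $d$ edges, not more than $2d-1$, and invertible matrices can have spanning-tree support. For example, the Hankel matrix with rows $0001,0011,0110,1100$ is invertible and its support is a Hamiltonian path. It actually occurs in your family for $p=x^4+x^3+x^2+x+1$, from the period-$5$ sequence $00011$. Since a spanning tree of the base never lifts to a connected graph, these supports must be excluded separately. This is why the paper counts tree supports (the quantity \eqref{eq:Td}) alongside disconnected ones, and records $\min_a\beta_1\ge 1$ in its certificates. Your voltage-lift stage is otherwise fine and matches the paper's.
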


Combining Theorem~\ref{thm:main} with the two exceptional degrees recalled above, the function $f$ is now completely determined.

\begin{corollary}\label{cor:complete}
$f(2)=2$, $f(3)=4$, and $f(d)=2^{d}$ for every $d\ge 4$.
\end{corollary}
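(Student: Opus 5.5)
The corollary follows by assembling three inputs: Theorem~\ref{thm:main} for $d\ge 4$, and the two small-degree values recalled from \cite{AlonConn} for $d=2$ and $d=3$. For $d\ge 4$, Theorem~\ref{thm:main} directly gives $f(d)=2^{d}$. It also gives this with the right quantifiers: there are infinitely many pairwise nonisomorphic $d$-regular graphs $H$ carrying a linear connectivity code of dimension $d$, so $m(H)\ge 2^d$ for each of them. Combined with the cut bound $m(H)\le 2^{\delta(H)}=2^d$, this gives $m(H)=2^d$ for infinitely many such $H$. The value $q=2^d$ is therefore attained infinitely often, and no larger $q$ is possible, so $f(d)=2^d$.

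For $d=2$, the $d$-regular graphs are disjoint unions of cycles. Only the connected ones, the cycles $C_n$, can carry a code of size more than $1$, since a disconnected $H$ has no connected spanning subgraph. For $C_n$ with $n\ge 3$, a connected spanning subgraph is either the whole cycle or a path omitting one edge. So $\mathcal{C}$ is a connectivity code exactly when any two distinct members differ on at least $n-1$ edges. Any three distinct members $A,B,C$ would need pairwise distances of at least $n-1$; then
\[
|A\triangle C|\ \le\ |E|-\bigl|(A\triangle B)\cap(B\triangle C)\bigr|\ \le\ n-(n-2)\ =\ 2,
\]
which is less than $n-1$ once $n\ge 4$. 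So $m(C_n)\le 2$ for $n\ge 4$, and $\{\emptyset,E\}$ attains $2$. Hence $f(2)=2$. The finitely many exceptions such as $C_3$ are irrelevant, since $f$ only counts values attained infinitely often.

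For $d=3$, I would quote the Plotkin-bound argument of \cite{AlonConn}. In a connected cubic graph on $n$ vertices, a connected spanning subgraph has at least $n-1=\tfrac{2}{3}|E|$ edges. A connectivity code is thus a binary code of length $|E|$ and relative minimum distance at least $2/3>1/2$. The Plotkin bound then gives at most $4$ codewords once $n$ is large, since the bound $2\lfloor d/(2d-N)\rfloor$ evaluates to about $4$. The matching lower bound comes from infinitely many cubic graphs admitting $4$ codewords, which is exactly what the statement $f(3)=4$ in \cite{AlonConn} provides. I would cite both directions rather than reprove them.

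The only step needing care is checking that the definition of $f$ as a maximum over values attained infinitely often matches what each input provides. In particular, for $d\ge4$ the theorem's ``infinitely many nonisomorphic'' clause is essential. Beyond that the corollary is pure bookkeeping.
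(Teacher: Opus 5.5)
Your proposal is correct and follows the paper's approach: the paper's proof simply combines Theorem~\ref{thm:main} for $d\ge 4$ with the values $f(2)=2$ and $f(3)=4$ from \cite{AlonConn}, and your self-contained check of $f(2)=2$ is a valid addition. One small slip in your $d=3$ sketch: $n-1=\tfrac23|E|-1$, not $\tfrac23|E|$. This does not affect the argument, because the relative minimum distance still exceeds $1/2$ once $n$ is large, so the Plotkin bound still gives at most $4$ codewords.
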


The construction also settles the value of $m$ for the complete bipartite graphs, a finite statement not covered by the expander theorem of \cite{AlonConn} in small degrees.

\begin{corollary}\label{cor:kdd}
$m(K_{d,d})=2^{d}$ for every $d\ge 4$.
\end{corollary}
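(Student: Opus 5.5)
The upper bound is immediate: $K_{d,d}$ is $d$-regular and $d$-edge-connected, so $m(K_{d,d})\le 2^{k'(K_{d,d})}=2^{d}$ by the cut argument from the introduction. For the lower bound we build a linear connectivity code of dimension $d$ directly on $K_{d,d}$, following the construction sketched in the abstract.

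Label the sides $\{a_0,\dots,a_{d-1}\}$ and $\{b_0,\dots,b_{d-1}\}$. Fix $T\in\GL_d(2)$ with irreducible characteristic polynomial and a nonzero vector $v\in\FF_2^d$. Give the edge $a_ib_j$ the label $\ell(a_ib_j)=T^{i+j \bmod d}v$, or more generally $T^{\sigma(i,j)}v$ for a Latin square $\sigma$. Then each vertex sees each of the $d$ labels $v,Tv,\dots,T^{d-1}v$ exactly once. These labels form a basis, since irreducibility makes $v$ a cyclic vector. For each functional $\phi\in(\FF_2^d)^*$, set $C_\phi=\{e:\phi(\ell(e))=1\}$. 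The map $\phi\mapsto C_\phi$ is linear, and it is injective because the labels span. The symmetric difference of $C_\phi$ and $C_\psi$ is $C_{\phi+\psi}$, so it suffices to show that $C_\phi$ is a connected spanning subgraph for every $\phi\neq 0$.

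The subgraph $C_\phi$ is a union of the perfect matchings $M_k=\{a_ib_j:\sigma(i,j)=k\}$ over $k\in S_\phi=\{k:\phi(T^kv)=1\}$. Since $\phi\ne0$ and the labels form a basis, $S_\phi\neq\emptyset$, so every vertex has degree $|S_\phi|\ge1$ and $C_\phi$ is spanning. Connectivity is the real issue. With the cyclic Latin square $\sigma(i,j)=i+j$, a union of matchings $M_k$, $k\in S$, is connected if and only if the differences $S-S$ generate $\mathbb{Z}_d$; here we identify $a_i\mapsto i$ and $b_j\mapsto -j$. This fails when $|S_\phi|=1$, because a single perfect matching is disconnected for $d\ge2$. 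The cyclic Latin square is therefore too rigid, and I would instead pick $\sigma$ and the label assignment more cleverly. Concretely, I would let the labels on the edges be $x_{ij}=T^{j}u_i$, where $u_0,\dots,u_{d-1}$ are chosen so that around every vertex the labels still form a basis. The requirement is then that no nonzero $\phi$ vanishes on all but a "disconnecting" pattern. By duality, $C_\phi$ is disconnected exactly when there is a cut $\delta(X)$ with $\phi=0$ on all of its edges. So the condition becomes: for every nontrivial vertex set $X$ with $\emptyset\ne X\ne V$, the labels on $\delta(X)$ span $\FF_2^d$.

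I would verify this spanning condition in two regimes. For small cuts, take $X$ to be a single vertex: its $d$ labels are a basis by design. For a general $X$ with $p$ vertices on side $A$ and $q$ on side $B$, the cut has $p(d-q)+q(d-p)\ge d$ edges. I would argue that $d$ generic labels among them span. This is a first-moment count: average over $T$ in one irreducible conjugacy class, which acts transitively on cyclic vectors, and bound the probability that a fixed cut's labels lie in a hyperplane by roughly $2^{d}\cdot 2^{-\mathrm{rank}}$. Summing over the at most $4^{d}$ cuts, weighted by their size, gives an expected number of bad cuts below $1$ once $d\ge7$. The remaining cases $4\le d\le 8$ I would settle by explicit computer-checked choices of $T$ and labelling, exhibiting certificates.

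The main obstacle is the first-moment estimate. The labels on a cut are not independent: they are orbit vectors of one operator, so the probability that a given hyperplane contains them must be controlled through the structure of $\FF_2[T]\cong\FF_{2^d}$. In that field the condition "$\phi$ kills $T^{k}u$ for all $k$ in a set $K$" becomes a trace condition $\mathrm{Tr}(\alpha\beta^{k})=0$. I would try first to bound the number of such $\alpha$ via the number of roots of a sparse linearized polynomial, which gives enough decay when the cut is large. The balanced medium-size cuts are where the counting is tightest.
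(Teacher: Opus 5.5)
Your reduction to cuts is correct, and your final labelling $x_{ij}=T^{j}u_i$ is the paper's labelling $\lambda(x_iy_j)=T^{i}e_j$ with the sides swapped. The overall plan also matches the paper's: random $T$ in one irreducible conjugacy class for large $d$, and certificates for small $d$. The gap is that the first-moment bound, which carries the whole proof, is left open, and the route you sketch would not close it.

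Counting functionals $\alpha$ with $\mathrm{Tr}(\alpha\beta^{j}\gamma_i)=0$ on a cut, via roots of a sparse linearized polynomial, is a statement about a \emph{fixed} $T$ and a fixed $u$. For fixed $T$ and $u$ it is deterministic whether the cut labels span, so this count gives no probability over $T$. ``Transitivity on cyclic vectors'' controls the law of one vector only, not the joint law of the labels on a cut. Your per-cut bound $2^{d}\cdot 2^{-\mathrm{rank}}$ is an unproved independence heuristic. Read as $2^{d-|\delta(X)|}$, it does not even give your threshold: the $d^{2}$ cuts $\delta(\{a_i,b_j\})$ alone contribute $4d^{2}/2^{d}$, which is $>1$ at $d=7$ and $=1$ at $d=8$. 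Note also that the dominant cuts are the smallest balanced ones, not medium-size ones.

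Two ideas are missing.

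\textbf{Invertibility.} With $u_i=e_i$, the biadjacency matrix of $C_\phi$ has columns $\phi,\phi T,\dots,\phi T^{d-1}$. By irreducibility these form a basis, so the matrix is invertible. An invertible matrix with disconnected support is block diagonal with \emph{square} blocks. Hence every cut with $p\neq q$ is ruled out outright, and only balanced splits need counting.

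\textbf{An exact joint law.} The centralizer of $T$ is $\FF_{2^{d}}^{\times}$, so the conjugacy class has $g_d/(2^{d}-1)$ elements, where $g_r=|\GL_{r}(2)|$. This equals the number of ordered bases beginning with $\phi$. The map $T\mapsto(\phi,\phi T,\dots,\phi T^{d-1})$ is injective by Cayley--Hamilton, hence bijective. So the matrix is uniform among invertible matrices with that prescribed first line.

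With these two facts, a fixed balanced split of size $s$ compatible with $\phi$ has probability exactly $(2^{d}-1)g_sg_{d-s}/\bigl((2^{s}-1)g_d\bigr)$. Summing over $\phi$ and over splits gives a total of at most $\frac{5}{4}d^{2}2^{1-d}<1$ for $d\ge 7$. The single-edge splits contribute exactly $d^{2}/2^{d-1}$, half your heuristic. For this corollary only disconnection matters; the spanning-tree count is needed only for the lifts.

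Finally, $d=4,5,6$ cannot be waved through. There the single-edge splits alone give $d^{2}/2^{d-1}>1$, so the first-moment argument genuinely fails. The proof therefore needs explicit verified matrices, such as the paper's $T_4,T_5,T_6$, and not only the intention to find them.
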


Our argument is elementary, uniform in $d$, and independent of the route taken in \cite{AlonConn}: it uses neither expansion nor the local lemma. It has three steps.

\begin{enumerate}
\item[(i)] \emph{An algebraic base graph.} We label the edges of $K_{d,d}$ by vectors of $\FF_{2}^{d}$, setting $\lambda(x_{i}y_{j})=T^{i}e_{j}$ for an operator $T\in\GL_{d}(2)$ with irreducible characteristic polynomial. Irreducibility makes the $2^{d}-1$ selected subgraphs have invertible biadjacency matrices; what has to be arranged is that their supports are connected and contain a cycle (Section~\ref{sec:labels}).
\item[(ii)] \emph{A first-moment count over one conjugacy class.} For $d\ge 7$, choosing $T$ uniformly in a single irreducible conjugacy class of $\GL_{d}(2)$ makes each selected biadjacency matrix uniform among invertible matrices with prescribed first row; counting disconnected supports and spanning-tree supports and summing over all $2^{d}-1$ covectors gives a total failure probability below $1$ (Section~\ref{sec:prob}). Exact certificates handle $4\le d\le 8$ (Section~\ref{sec:certificates}).
\item[(iii)] \emph{Voltage lifts.} Random $\FF_{p}$-voltage covers of the labelled base graph preserve $d$-regularity, bipartiteness and simplicity, and keep every selected subgraph connected, provided each selected subgraph of the base contains a cycle. This is exactly why step (ii) certifies a cycle and not merely connectivity (Section~\ref{sec:lifts}).
\end{enumerate}

The computer enters only in step~(ii)'s finite complement: five explicit matrices are checked over all $15+31+63+127+255=491$ nonzero covectors, by a short program using exact integer arithmetic and the Python standard library, reproduced in Appendix~\ref{app:code}. Everything else is symbolic. Section~\ref{sec:concluding} lists what remains open.

\section{Vector labels on \texorpdfstring{$K_{d,d}$}{K(d,d)}}\label{sec:labels}

We first record the upper bound in the form we use.

\begin{lemma}\label{lem:upper}
If $H$ has a vertex of degree $d$ and at least two vertices, then $m(H)\le 2^{d}$.
\end{lemma}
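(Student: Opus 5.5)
The plan is a pigeonhole argument on the edges at the degree-$d$ vertex. Fix a vertex $v$ of degree $d$, and let $E_v$ be the set of $d$ edges incident to $v$. For a connectivity code $\mathcal{C}$, consider the restriction map
\[
\rho:\mathcal{C}\to 2^{E_v},\qquad A\mapsto A\cap E_v .
\]
We will show that $\rho$ is injective. Since $|2^{E_v}|=2^{d}$, this gives $|\mathcal{C}|\le 2^{d}$, and taking $\mathcal{C}$ of maximum size yields $m(H)\le 2^{d}$.

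For injectivity, take distinct $A,B\in\mathcal{C}$ with $A\cap E_v=B\cap E_v$. Then the symmetric difference $A\triangle B$ contains no edge of $E_v$, so $v$ is an isolated vertex of the spanning subgraph $(V,A\triangle B)$. Because $H$ has at least two vertices, a spanning subgraph with an isolated vertex is disconnected. This contradicts the defining property of a connectivity code, so $\rho$ is injective.

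There is no real obstacle here. The one point needing care is the hypothesis that $H$ has at least two vertices: on a single vertex, a subgraph with an isolated vertex would still count as connected, and the argument would fail.
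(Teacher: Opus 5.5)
Your proposal is correct and follows the paper's proof: both restrict each codeword to the $d$ edges at a fixed degree-$d$ vertex $v$. Two codewords with the same restriction would have a symmetric difference that isolates $v$, so the map is injective into a set of size $2^{d}$. Your remark on why the hypothesis of at least two vertices is needed is a correct clarification of a point the paper leaves implicit.
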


\begin{proof}
Fix a vertex $v$ of degree $d$ and let $\delta_{H}(v)$ be the set of edges incident with it. The map $A\mapsto A\cap\delta_{H}(v)$ is injective on any connectivity code: if two codewords have the same restriction to $\delta_{H}(v)$, their symmetric difference isolates $v$ and is therefore not a connected spanning subgraph. There are $2^{d}$ possible restrictions.
\end{proof}

Let $W=\FF_{2}^{d}$ with dual $W^{*}$. Given a labelling $\lambda\colon E(H)\to W$ and $a\in W^{*}$ put
\[
E_{a}=\{e\in E(H): a(\lambda(e))=1\}.
\]
Since $a(\lambda(e))\ne b(\lambda(e))$ if and only if $(a+b)(\lambda(e))=1$, we have $E_{a}\triangle E_{b}=E_{a+b}$. The following is the labelling criterion of \cite[Lemma 2.1]{AlonConn}, in the equivalent formulation we shall use; recall that a set of edges meets every cut of $H$ if and only if it is a connected spanning subgraph.

\begin{lemma}\label{lem:criterion}
If $E_{a}$ is a connected spanning subgraph of $H$ for every nonzero $a\in W^{*}$, then $\{E_{a}: a\in W^{*}\}$ is a linear connectivity code for $H$ of dimension $d$, so $m(H)\ge 2^{d}$.
\end{lemma}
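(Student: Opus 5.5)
The plan is to verify three things in turn: that $\mathcal{C}=\{E_a : a\in W^{*}\}$ is a connectivity code, that it is a linear subspace of $\FF_2^{E}$, and that it has exactly $2^{d}$ members.

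\emph{Connectivity code.} Take distinct $a,b\in W^{*}$. The identity $E_a\triangle E_b=E_{a+b}$ noted just before the statement applies, and $a+b\neq 0$. By hypothesis $E_{a+b}$ is a connected spanning subgraph, so every pairwise symmetric difference of codewords is connected and spanning. This is the whole content of the criterion.

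\emph{Linearity.} The map $\phi\colon W^{*}\to\FF_2^{E}$, $a\mapsto \mathbf 1_{E_a}$, has coordinates $\phi(a)_e=a(\lambda(e))$, which are linear in $a$. Hence $\phi$ is $\FF_2$-linear and its image $\mathcal{C}$ is a subspace.

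\emph{Dimension.} To get $|\mathcal{C}|=2^{d}$ I need $\phi$ to be injective, so I compute its kernel. If $\phi(a)=0$ with $a\neq 0$, then $E_a=\emptyset$. The empty edge set is a connected spanning subgraph only when $H$ has a single vertex. In the intended setting $H$ has at least two vertices; this is implicit in the hypothesis and is assumed in Lemma~\ref{lem:upper}. So $E_a=\emptyset$ contradicts the hypothesis, the kernel is trivial, and $\dim\mathcal{C}=\dim W^{*}=d$.

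The only point needing care is this injectivity step, specifically the degenerate one-vertex case. I would handle it by noting that $H$ has at least two vertices throughout, and then conclude $m(H)\ge 2^{d}$.
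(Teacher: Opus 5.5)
Your proof is correct and matches what the paper intends: the paper gives no separate proof, and rests the lemma on the identity $E_a\triangle E_b=E_{a+b}$ stated just before it (together with Alon's Lemma 2.1). You also spell out why $a\mapsto E_a$ is injective, and you rightly note that this step needs $H$ to have at least two vertices, a hypothesis the paper leaves implicit.
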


From now on $H=K_{d,d}$, with left vertices $x_{0},\dots,x_{d-1}$ and right vertices $y_{0},\dots,y_{d-1}$. Let $e_{0},\dots,e_{d-1}$ be the standard basis of $W$, let $T\in\GL_{d}(2)$, and set
\begin{equation}\label{eq:label}
\lambda(x_{i}y_{j})=T^{i}e_{j}\qquad (0\le i,j<d).
\end{equation}
For a nonzero row vector $a\in W^{*}$ the selected graph $E_{a}$ has biadjacency matrix
\begin{equation}\label{eq:biadj}
M_{a}(T)=\bigl(a\,T^{i}e_{j}\bigr)_{0\le i,j<d},
\end{equation}
whose rows are $a,\,aT,\,\dots,\,aT^{d-1}$. For a binary matrix $M$ let $\Gam(M)$ denote its bipartite support graph, with one left vertex per row, one right vertex per column, and an edge for each nonzero entry; thus $E_{a}$ and $\Gam(M_{a}(T))$ are the same graph.

\begin{lemma}\label{lem:irred}
If the characteristic polynomial of $T$ is irreducible of degree $d$, then $M_{a}(T)$ is invertible for every nonzero $a\in W^{*}$.
\end{lemma}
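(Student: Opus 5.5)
The plan is to show that the rows $a, aT, \dots, aT^{d-1}$ of $M_{a}(T)$ are linearly independent in $W^{*}$. Suppose some nontrivial combination vanishes: $\sum_{i<d} c_{i}\, aT^{i}=0$ with $c_{i}\in\FF_{2}$ not all zero. Put $g(t)=\sum_{i<d}c_{i}t^{i}$. This is a nonzero polynomial of degree less than $d$, and the relation says $a\,g(T)=0$.

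Next consider the annihilator ideal $I_{a}=\{h\in\FF_{2}[t]: a\,h(T)=0\}$. It is an ideal of the principal ideal domain $\FF_{2}[t]$, so it is generated by a monic polynomial $\mu_{a}$, the local minimal polynomial of $a$. By Cayley--Hamilton, $\chi_{T}(T)=0$, so $\chi_{T}\in I_{a}$. Hence $\mu_{a}$ divides $\chi_{T}$.

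Irreducibility of $\chi_{T}$ now leaves only two options: $\mu_{a}=1$ or $\mu_{a}=\chi_{T}$. The case $\mu_{a}=1$ would give $a=a\cdot 1(T)=0$, which is excluded because $a\ne 0$. So $\mu_{a}=\chi_{T}$, which has degree $d$. But $g$ is a nonzero element of $I_{a}$ of degree less than $d$, and it must be divisible by $\mu_{a}$, which is impossible. This contradiction shows the rows are independent, so the $d\times d$ matrix $M_{a}(T)$ has full rank and is invertible.

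There is no real obstacle here. The one point that needs care is that $T$ acts on row vectors from the right, so it is worth checking that $a\,h(T)$ is the correct expression for the combination of the rows. This works because $aT^{i}$ is exactly row $i$ of $M_{a}(T)$ in \eqref{eq:biadj}.

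An equivalent route, if preferred, uses field structure. Since $\chi_{T}$ is irreducible, $\FF_{2}[T]\cong\FF_{2^{d}}$ is a field, so every nonzero $g(T)$ is invertible. Then $a\,g(T)=0$ forces $a=0$.
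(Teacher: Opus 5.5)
Your proof is correct and is essentially the paper's argument: the paper takes your closing route, viewing $\FF_{2}[T]\cong\FF_{2^{d}}$ as a field over which $W^{*}$ is one-dimensional, so every nonzero covector is cyclic. Your main route reaches the same cyclicity through the local minimal polynomial $\mu_{a}$ dividing the irreducible $\chi_{T}$, which is marginally more direct because it needs neither the field identification nor a dimension count.
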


\begin{proof}
Irreducibility makes $\FF_{2}[T]\cong\FF_{2^{d}}$ a field, and $W$ a one-dimensional vector space over it; the same holds for the dual action on $W^{*}$. Hence every nonzero covector is cyclic, so $a,aT,\dots,aT^{d-1}$ is a basis of $W^{*}$.
\end{proof}

\begin{remark}\label{rem:stars}
Invertibility of $M_{a}(T)$ does not by itself imply that its support is connected, and it is connectivity that the next two sections must supply. Conversely, once every $E_{a}$ is a connected spanning subgraph, the labels on the $d$ edges at any single vertex automatically form a basis of $W$: otherwise some nonzero $a$ would annihilate all of them and isolate that vertex in $E_{a}$. So the one-vertex-cut condition of \cite[Lemma 2.1]{AlonConn} needs no separate verification here.
\end{remark}

\section{A probabilistic irreducible operator for \texorpdfstring{$d\ge 7$}{d>=7}}\label{sec:prob}

Throughout this section fix a monic irreducible polynomial of degree $d$ over $\FF_{2}$ and let $T$ be chosen uniformly at random from the conjugacy class $\mathcal{T}\subseteq\GL_{d}(2)$ consisting of the matrices with that characteristic polynomial. Write $g_{r}=|\GL_{r}(2)|$ with $g_{0}=1$, so that
\begin{equation}\label{eq:grec}
g_{r+1}=2^{r}\bigl(2^{r+1}-1\bigr)g_{r},\qquad g_{d}=\prod_{i=0}^{d-1}\bigl(2^{d}-2^{i}\bigr).
\end{equation}

\begin{lemma}[Uniform cyclic basis]\label{lem:uniform}
Fix $0\ne a\in W^{*}$. As $T$ ranges uniformly over $\mathcal{T}$, the matrix $M_{a}(T)$ is uniformly distributed over the set of all invertible $d\times d$ binary matrices whose first row is $a$.
\end{lemma}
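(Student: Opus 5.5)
The plan is to exhibit an explicit bijection between the conjugacy class $\mathcal{T}$ and the set
\[
\mathcal{M}_{a}=\{M\in\GL_{d}(2):\ e_{0}^{\top}M=a\}
\]
of invertible matrices with first row $a$, and to show that this bijection is exactly $T\mapsto M_{a}(T)$. A uniform distribution pushed forward along a bijection is uniform, so this proves the lemma. No counting is then needed, although it is reassuring that both sets have size $g_{d}/(2^{d}-1)$.

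\textbf{Step 1: the reference operator.} Write the fixed irreducible polynomial as $\chi(x)=x^{d}+\sum_{k<d}c_{k}x^{k}$. Let $C$ be the companion matrix in the row-action convention: row $i$ of $C$ is $e_{i+1}^{\top}$ for $i<d-1$, and the last row is $(c_{0},\dots,c_{d-1})$. Then $e_{0}^{\top}C^{i}=e_{i}^{\top}$ for $0\le i<d$, and $C$ has characteristic polynomial $\chi$. Since $\chi$ is irreducible, every matrix with characteristic polynomial $\chi$ is similar to $C$. Hence $\mathcal{T}=\{M^{-1}CM: M\in\GL_{d}(2)\}$.

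\textbf{Step 2: the map lands in $\mathcal{M}_{a}$ and has a left inverse.} Take $T\in\mathcal{T}$ and put $M=M_{a}(T)$. Its first row is $a$, and it is invertible by Lemma~\ref{lem:irred}, so $M\in\mathcal{M}_{a}$.

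Next compare $MT$ with $CM$ row by row:
\begin{itemize}
\item for $i<d-1$, row $i$ of $MT$ is $aT^{i+1}$, which is row $i+1$ of $M$, and this equals row $i$ of $CM$;
\item row $d-1$ of $MT$ is $aT^{d}=\sum_{k}c_{k}aT^{k}$ by Cayley--Hamilton, and this equals the last row of $CM$.
\end{itemize}
Hence $MTM^{-1}=C$, that is, $T=M^{-1}CM$. So $T$ is recovered from $M_{a}(T)$, and the map is injective.

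\textbf{Step 3: surjectivity.} Given $M\in\mathcal{M}_{a}$, set $T=M^{-1}CM\in\mathcal{T}$. Then
\[
aT^{i}=e_{0}^{\top}M\,M^{-1}C^{i}M=e_{0}^{\top}C^{i}M=e_{i}^{\top}M,
\]
which is row $i$ of $M$. Therefore $M_{a}(T)=M$, and the map is surjective.

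Uniformity of $M_{a}(T)$ on $\mathcal{M}_{a}$ then follows immediately. The only delicate point, and the one I would check most carefully, is the convention in Step~1. The companion matrix must be oriented so that $e_{0}^{\top}C^{i}=e_{i}^{\top}$ holds under the row action $a\mapsto aT$. With the transposed convention the identities in Steps 2 and 3 fail. Everything else is routine.
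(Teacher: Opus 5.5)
Your proof is correct and is essentially the paper's argument. The paper also shows that $T\mapsto(a,aT,\dots,aT^{d-1})$ is a bijection from $\mathcal{T}$ onto the ordered bases beginning with $a$: injectivity comes from Cayley--Hamilton, and surjectivity from taking the operator whose matrix in the given basis is the companion matrix, which is exactly your $T=M^{-1}CM$. The paper additionally compares cardinalities via the centraliser $\FF_{2^{d}}^{\times}$; your explicit two-sided inverse makes that count unnecessary.
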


\begin{proof}
The centraliser in $\GL_{d}(2)$ of an irreducible cyclic operator is $\FF_{2^{d}}^{\times}$, of order $2^{d}-1$, so $|\mathcal{T}|=g_{d}/(2^{d}-1)$. This is exactly the number of ordered bases $(v_{0},\dots,v_{d-1})$ of $W^{*}$ with $v_{0}=a$, namely $\prod_{i=1}^{d-1}(2^{d}-2^{i})$.

The map $T\mapsto(a,aT,\dots,aT^{d-1})$ sends $\mathcal{T}$ into that set by Lemma~\ref{lem:irred}, and it is injective: right multiplication by $T$ sends $v_{i}$ to $v_{i+1}$ for $i<d-1$, while $v_{d-1}T$ is determined by the prescribed characteristic polynomial through the Cayley--Hamilton relation, so the ordered basis determines $T$. Conversely, given an ordered basis starting with $a$, these rules define an operator whose matrix in that basis is the companion matrix of the fixed irreducible polynomial, hence an element of $\mathcal{T}$. The map is therefore a bijection between two sets of equal size, and the claim follows.
\end{proof}

We also need a count of spanning trees of $K_{d,d}$ by the neighbourhood of a fixed vertex.

\begin{lemma}\label{lem:trees}
Let $1\le w\le d$ and let $N$ be a set of $w$ right vertices of $K_{d,d}$. The number of spanning trees of $K_{d,d}$ in which the neighbourhood of $x_{0}$ is exactly $N$ equals $\tfrac{w}{d}(d-1)^{d-w}d^{\,d-1}$.
\end{lemma}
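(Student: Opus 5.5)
We will use the degree-refined form of Scoins' formula for spanning trees of the complete bipartite graph. Write the left vertices as $x_0,\dots,x_{d-1}$ and the right vertices as $y_0,\dots,y_{d-1}$, and give them weights $s_0,\dots,s_{d-1}$ and $t_0,\dots,t_{d-1}$. The generating identity is
\[
\sum_{\tau}\prod_{i}s_i^{\deg_\tau x_i}\prod_j t_j^{\deg_\tau y_j}
=\Bigl(\prod_i s_i\Bigr)\Bigl(\prod_j t_j\Bigr)\,(s_0+\dots+s_{d-1})^{d-1}(t_0+\dots+t_{d-1})^{d-1},
\]
where the sum runs over spanning trees $\tau$ of $K_{d,d}$. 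It yields the standard corollary that the number of spanning trees with prescribed degrees $(\alpha_i)$ on the left and $(\beta_j)$ on the right is
\[
\binom{d-1}{\alpha_0-1,\dots,\alpha_{d-1}-1}\binom{d-1}{\beta_0-1,\dots,\beta_{d-1}-1}.
\]
Setting $\alpha_0=w$ and summing over the remaining left degrees gives the number of trees with $\deg x_0=w$:
\[
\binom{d-1}{w-1}(d-1)^{d-w}\,d^{\,d-1}.
\]
Here the factor $(d-1)^{d-w}$ counts the ways to distribute the remaining $d-w$ left ``excess'' units among the other $d-1$ left vertices, and $d^{\,d-1}$ is the unrestricted right factor.

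The symmetry step is next. The automorphism group $S_d$ acting on the right side fixes $x_0$ and permutes transitively the $\binom{d}{w}$ possible neighbourhoods $N$ of size $w$. Consequently each such $N$ receives the same number of trees, namely
\[
\binom{d-1}{w-1}\Big/\binom{d}{w}\cdot(d-1)^{d-w}d^{\,d-1}=\tfrac{w}{d}(d-1)^{d-w}d^{\,d-1},
\]
which is the claim.

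The main obstacle is justifying the refined Scoins formula; I would not rely on citing it bare. The self-contained route I would try first is a direct bijective count. Remove $x_0$ from a tree whose neighbourhood is exactly $N$. This leaves a spanning forest of $K_{d,d}-x_0$ with exactly $w$ components, each containing exactly one vertex of $N$; equivalently, it is a rooted spanning forest with root set $N$. For such forests I would invoke the generalized matrix-tree (forest) theorem: the number of forests of a graph rooted at a set $R$ equals $\det L_{\bar R}$, the Laplacian with the rows and columns of $R$ deleted. Here that reduced Laplacian is block structured. The remaining $d-1$ left vertices all have degree $d$, and the $d-w$ non-root right vertices have degree $d-1$. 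The matrix is $\begin{pmatrix} dI_{d-1} & -J\\ -J^{T} & (d-1)I_{d-w}\end{pmatrix}$, with $J$ the all-ones $(d-1)\times(d-w)$ matrix. Its determinant follows from the Schur complement: $J^{T}J=(d-1)J_{d-w}$, and $(d-1)I-\tfrac{d-1}{d}J_{d-w}$ has eigenvalues $d-1$ with multiplicity $d-w-1$, and $(d-1)\tfrac{w}{d}$ once. Multiplying by $d^{\,d-1}$ gives $d^{\,d-1}(d-1)^{d-w}\tfrac{w}{d}$, matching the formula directly. I would present this determinant computation as the actual proof and keep the symmetry argument only as a consistency check.
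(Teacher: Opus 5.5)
Your argument is correct. Your first half (the refined Scoins formula, then the transitive action of $S_d$ on the right side) is the paper's proof in generating-function form. The paper reads off the count $\binom{d-1}{w-1}(d-1)^{d-w}d^{\,d-1}$ of trees with $\deg x_0=w$ from the bipartite Pr\"ufer correspondence, which is exactly the bijection behind the refined Scoins identity. It then uses the same symmetry to divide by $\binom{d}{w}$.

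The determinant argument you would actually present is a genuinely different route, and it checks out:
\begin{itemize}
\item Deleting $x_0$ is a bijection, with inverse ``join $x_0$ to every vertex of $N$'', onto spanning forests of $K_{d,d}-x_0$ with $w$ trees, each containing exactly one vertex of $N$.
\item The reduced Laplacian of $K_{d,d}-x_0$ is as you write, and $J^{T}J=(d-1)J_{d-w}$.
\item The Schur complement has determinant $(d-1)^{d-w}\tfrac{w}{d}$.
\item When $w=d$ the Schur complement is empty and your eigenvalue description is vacuous. The minor is then simply $\det(dI_{d-1})=d^{\,d-1}$, as required.
\end{itemize}

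The paper's route is a one-line count. It delivers the whole distribution of $\deg x_0$ and the sanity check that summing over $w$ returns $d^{2d-2}$. However, it leans on a cited bijection specific to complete bipartite graphs, plus a symmetry step to pass from degree to neighbourhood.

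Your route computes the fixed-$N$ count directly and needs no symmetry. It rests only on Kirchhoff's theorem: the rooted-forest version follows by identifying the independent set $N$ to a single vertex, since the reduced Laplacian at that vertex is exactly $L_{\bar N}$. It would therefore adapt to other graphs whose Laplacian minors are computable. The price is a small linear-algebra computation in place of a counting argument.
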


\begin{proof}
The bipartite Prüfer correspondence identifies the spanning trees of $K_{d,d}$ with pairs $(\alpha,\beta)$ of sequences of length $d-1$ over $\{0,\dots,d-1\}$, in such a way that $\deg(x_{i})=1+\#\{\text{occurrences of }i\text{ in }\alpha\}$. Hence the number of spanning trees with $\deg(x_{0})=w$ is $\binom{d-1}{w-1}(d-1)^{d-w}d^{\,d-1}$: choose the positions of $0$ in $\alpha$, fill the remaining $d-w$ positions with the other $d-1$ symbols, and take $\beta$ arbitrary. Relabelling the right vertices shows that all $\binom{d}{w}$ neighbourhoods of size $w$ occur equally often, and $\binom{d-1}{w-1}/\binom{d}{w}=w/d$. Summing over $w$ returns Cayley's count $d^{2d-2}$ for $K_{d,d}$, as it must.
\end{proof}

\begin{lemma}[Bad-support estimate]\label{lem:bad}
Let $d\ge 7$. With positive probability a uniformly random $T\in\mathcal{T}$ satisfies, simultaneously for every nonzero $a\in W^{*}$, that $\Gam(M_{a}(T))$ is connected and contains a cycle.
\end{lemma}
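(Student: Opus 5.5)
The proof is a first-moment argument: a union bound over the $2^{d}-1$ nonzero covectors $a$, with Lemma~\ref{lem:uniform} reducing each term to a count of invertible matrices. Fix $0\ne a\in W^{*}$ of weight $w=|a|\ge 1$. By Lemma~\ref{lem:uniform}, $M_{a}(T)$ is uniform over the $\prod_{i=1}^{d-1}(2^{d}-2^{i})=g_{d}/(2^{d}-1)$ invertible matrices with first row $a$. The graph $\Gam(M_{a}(T))$ fails if it is disconnected or is a forest. It has $2d$ vertices and, by invertibility, no isolated vertex (a zero row or column is impossible). So if it is connected and acyclic it is a spanning tree of $K_{d,d}$. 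We therefore bound
\[
P_{a}\ \le\ \frac{D_{a}+S_{a}}{\prod_{i=1}^{d-1}(2^{d}-2^{i})},
\]
where $D_{a}$ counts invertible matrices with first row $a$ and disconnected support, and $S_{a}$ counts spanning trees in which $x_{0}$ has neighbourhood $\mathrm{supp}(a)$. By Lemma~\ref{lem:trees}, $S_{a}=\tfrac{w}{d}(d-1)^{d-w}d^{d-1}\le (d-1)^{d-1}d^{d-1}$. This is tiny compared with roughly $0.29\cdot 2^{d^{2}-d}$ once $d\ge 7$. Summing $S_{a}$ over all $a$ gives at most Cayley's count $d^{2d-2}$.

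For $D_{a}$, note that a disconnected support with no isolated vertices splits into components. Invertibility forces each component to be square, i.e.\ to use $k$ rows and $k$ columns. Choose the component containing row $0$: it has $k$ rows including row $0$, with $1\le k\le d-1$, and $k$ columns, which must contain $\mathrm{supp}(a)$. After permuting, $M_{a}(T)$ is block diagonal with an invertible $k\times k$ block having first row $a$ restricted to those columns, and an invertible $(d-k)\times(d-k)$ block. This gives
\[
D_{a}\ \le\ \sum_{k=w}^{d-1}\binom{d-1}{k-1}\binom{d-w}{k-w}\,\frac{g_{k}}{2^{k}-1}\,g_{d-k}.
\]
Using the recurrence \eqref{eq:grec}, this sum is dominated by the extreme terms $k=d-1$ and $k=w$ (or $k=1$). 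Its ratio to $g_{d}/(2^{d}-1)$ is roughly $d^{2}2^{-d}$ for the $k=d-1$ term. In the extreme case $w=1,k=1$ the ratio is about $2^{-(d-1)}\cdot$(poly), since the first row being a unit vector forces $d-1$ zeros in a column.

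The key is then to sum over $a$ by weight. There are $\binom{d}{w}$ covectors of weight $w$. The constraint $\mathrm{supp}(a)\subseteq$ columns of the first block costs a factor of about $\binom{d-w}{k-w}/\binom{d}{k}$, which exactly compensates for the number of $a$. For $k$ large the per-$a$ bound is about $k\cdot 2^{-(d-k)k}$ times binomials. I would regroup the double sum as $\sum_{k}\sum_{w\le k}\binom{d}{w}\binom{d-w}{k-w}=\sum_{k}\binom{d}{k}2^{k}$, so the total is bounded by
\[
\sum_{k=1}^{d-1}\binom{d-1}{k-1}\binom{d}{k}2^{k}\frac{g_{k}g_{d-k}}{g_{d}}\cdot\frac{2^{d}-1}{2^{k}-1}.
\]
Now $g_{k}g_{d-k}/g_{d}\approx 2^{-2k(d-k)}\cdot c$. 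Hence the $k$-th term is at most roughly $\binom{d-1}{k-1}\binom{d}{k}2^{d}2^{-2k(d-k)}$, which for $k=d-1$ is about $d^{2}2^{d}2^{-2(d-1)}=d^{2}2^{-d+2}$. This is the main obstacle: at $d=7$, $49/32>1$, so the crude bound fails and the estimate must be sharpened.

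My first attempt at sharpening is to redo $k=d-1$ exactly. A lone component of size $1\times1$ means some row $i\neq0$ and some column $j$ are each supported on a single shared entry. The cases where row $0$ is that isolated row only arise for $w=1$. The count is then $\le d(d-1)\cdot g_{d-1}\cdot(\text{first-row constraint})$, and the constraint that the first row restricted to the big block equals $a$ saves a full factor $2^{d-1}-1$ rather than $2^{k}$. Keeping the exact product formula for $g_{r}$ gives a ratio of about $d(d-1)2^{-2(d-1)}\cdot 2^{d}/2\approx d^{2}2^{-d+1}$ summed over all $a$. I would then verify numerically that the total of this bound, the exact middle terms, and $d^{2d-2}(2^{d}-1)/g_{d}$ is below $1$ for $d=7$. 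For $d\ge 8$ the terms decay geometrically, so monotonicity in $d$ finishes the argument. If $d=7$ remains borderline, I would fall back on the exact certificate of Section~\ref{sec:certificates}, which covers $d\le 8$ anyway, and claim the probabilistic bound only for $d\ge 9$.
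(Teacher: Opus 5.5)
\textbf{There is a genuine gap: the bound for general $d$ is never proved.} Your framework matches the paper's. You use a union bound over $a\ne 0$, uniformity from Lemma~\ref{lem:uniform}, square components forced by invertibility, the same block count for disconnected supports, the tree count from Lemma~\ref{lem:trees}, and a regrouping over $(w,k)$. If you keep $w\ge 1$, the inner sum is $\binom{d}{k}(2^{k}-1)$, which cancels the $2^{k}-1$ exactly and gives the paper's $D_d$.

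\textbf{The obstacle at $d=7$ is an estimation slip.} By \eqref{eq:grec}, $(2^{d}-1)g_{1}g_{d-1}/g_{d}=2^{1-d}$ exactly. Your approximation $g_kg_{d-k}/g_d\approx 2^{-2k(d-k)}$ takes the constant as $1$ where it is about $\tfrac12$. So the $k=d-1$ term of your own regrouped sum is $d(d-1)2^{1-d}\cdot\frac{2^{d-1}}{2^{d-1}-1}=\tfrac23$ at $d=7$, not $49/32$. Evaluating your sum exactly at $d=7$ gives about $0.91$, or about $0.79$ with $2^{k}-1$. With the tree term of about $0.011$, the union bound already closes at $d=7$. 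Your ``sharpening'' only recomputes a term your formula already contained.

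\textbf{What is actually missing is uniformity in $d$.} Your sentence ``for $d\ge 8$ the terms decay geometrically, so monotonicity in $d$ finishes the argument'' carries the whole difficulty and is not proved. The sum has $d-1$ terms and is large at both ends, $k=1$ and $k=d-1$, and small in the middle. A numerical check at one $d$ plus an appeal to monotonicity proves nothing without an explicit bound $D_d\le C\,d^{2}2^{-d}$ with $C$ independent of $d$.

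\textbf{How the paper closes this.} Put $R_{d,r}=(2^{d}-1)g_rg_{d-r}/g_d$ and pair $s=r$ with $s=d-r$. Since $R_{d,r}=R_{d,d-r}$ and $\binom{d-1}{r-1}+\binom{d-1}{r}=\binom{d}{r}$, each pair contributes exactly $B_{d,r}=\binom{d}{r}^{2}R_{d,r}$, and the middle term contributes half that when $d$ is even. The ratio $B_{d,r+1}/B_{d,r}$ has a closed form from \eqref{eq:grec}. Casework on $d-2r$ shows this ratio is below $1/5$ for every $d\ge 7$, so $D_d\le\frac{5}{4}d^{2}2^{1-d}$. Together with $T_d<4d^{2d-2}2^{d-d^{2}}$, both bounds are visibly decreasing, and only this legitimately reduces everything to $d=7$. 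Your $2^{k}$ overcount (allowing $w=0$) spoils the exact $r\leftrightarrow d-r$ symmetry this uses, which is another reason to keep $w\ge 1$.

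\textbf{The certificate fallback does not prove the lemma as stated.} The lemma is about the class $\mathcal{T}$ of an arbitrary fixed irreducible polynomial. A certificate in Section~\ref{sec:certificates} exhibits one good matrix with one specific characteristic polynomial. Restricting to $d\ge 9$ would still require the same missing uniform estimate.
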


\begin{proof}
Fix a nonzero $a\in W^{*}$ of Hamming weight $w$. By Lemma~\ref{lem:uniform} the matrix $M=M_{a}(T)$ is uniform over the $g_{d}/(2^{d}-1)$ invertible matrices with first row $a$. Note first that $M$ has no zero row and no zero column, so $\Gam(M)$ has no isolated vertex; in particular $\Gam(M)$ is a spanning subgraph of $K_{d,d}$, and if it is connected and acyclic then it is a spanning tree.

\smallskip
\noindent\emph{Disconnected supports.} Suppose $\Gam(M)$ is disconnected and let the component containing the first row consist of $s$ rows and $t$ columns. The $s$ rows are supported on those $t$ columns, so invertibility forces $t\ge s$; applying this to every component and summing gives $t=s$ for each. The $s$ columns contain the $w$ prescribed neighbours of the first row, so $w\le s\le d-1$. Choosing the remaining $s-1$ rows and $s-w$ columns and then bounding the two diagonal blocks by arbitrary invertible blocks with the prescribed first row gives
\begin{equation}\label{eq:disc}
\Pr\bigl[\Gam(M)\ \text{disconnected}\bigr]\ \le\ \frac{2^{d}-1}{g_{d}}\sum_{s=w}^{d-1}\binom{d-1}{s-1}\binom{d-w}{s-w}\frac{g_{s}}{2^{s}-1}\,g_{d-s}.
\end{equation}
Summing \eqref{eq:disc} over the $\binom{d}{w}$ covectors of weight $w$ and over $w$, and using
\[
\sum_{w=1}^{s}\binom{d}{w}\binom{d-w}{s-w}=\binom{d}{s}\sum_{w=1}^{s}\binom{s}{w}=\binom{d}{s}\bigl(2^{s}-1\bigr),
\]
the factors $2^{s}-1$ cancel and the probability that at least one support is disconnected is at most
\begin{equation}\label{eq:Dd}
D_{d}\ :=\ \frac{2^{d}-1}{g_{d}}\sum_{s=1}^{d-1}\binom{d-1}{s-1}\binom{d}{s}g_{s}\,g_{d-s}.
\end{equation}

Pair the summands with indices $s=r$ and $s=d-r$. Writing
\[
R_{d,r}=\frac{(2^{d}-1)g_{r}g_{d-r}}{g_{d}},\qquad B_{d,r}=\binom{d}{r}^{2}R_{d,r},
\]
and using $R_{d,r}=R_{d,d-r}$ together with $\binom{d-1}{r-1}+\binom{d-1}{r}=\binom{d}{r}$, the paired contribution is exactly $B_{d,r}$ for $r<d/2$; when $d$ is even the middle summand equals $\tfrac12 B_{d,d/2}$, since $\binom{d-1}{d/2-1}=\tfrac12\binom{d}{d/2}$. By \eqref{eq:grec},
\[
B_{d,1}=\frac{d^{2}}{2^{d-1}},\qquad
\frac{B_{d,r+1}}{B_{d,r}}=\Bigl(\frac{d-r}{r+1}\Bigr)^{2}2^{\,2r-d+1}\,\frac{2^{r+1}-1}{2^{d-r}-1}.
\]

We claim each successive actual contribution is smaller than the previous one by a factor less than $1/5$. Put $k=d-2r$. If the transition is into the halved middle term then $d=2q$ and $r=q-1$, and since $d\ge 7$ we have $q\ge 4$; the extra factor $\tfrac12$ gives
\[
\frac{\tfrac12 B_{2q,q}}{B_{2q,q-1}}<\frac{1}{8}\Bigl(1+\frac{1}{q}\Bigr)^{2}\le\frac{25}{128}<\frac{1}{5}.
\]
Every other transition has $k\ge 3$, hence $r+k=d-r\ge 4$ and $\frac{2^{r+1}-1}{2^{r+k}-1}<\frac{16}{15}2^{1-k}$, so that
\[
\frac{B_{d,r+1}}{B_{d,r}}<\frac{16}{15}\Bigl(\frac{r+k}{r+1}\Bigr)^{2}2^{2-2k}.
\]
If $k=3$ then $d\ge 7$ forces $r\ge 2$ and the right-hand side is less than $5/27$; if $k=4$ then $r\ge 2$ and it is less than $1/15$; if $k\ge 5$ then $(r+k)/(r+1)\le k$ and it is at most $\tfrac{16}{15}k^{2}2^{2-2k}\le 5/48$. In all cases the factor is below $1/5$, whence
\begin{equation}\label{eq:Ddbound}
D_{d}\ \le\ B_{d,1}\sum_{j\ge 0}5^{-j}\ =\ \frac{5}{4}\cdot\frac{d^{2}}{2^{d-1}}.
\end{equation}

\smallskip
\noindent\emph{Acyclic supports.} If $\Gam(M)$ is connected but acyclic it is a spanning tree of $K_{d,d}$, and then $M$ is determined by its support. By Lemma~\ref{lem:trees} the number of such $M$ with first row $a$ of weight $w$ is $\tfrac{w}{d}(d-1)^{d-w}d^{\,d-1}$, and
\[
\sum_{w=1}^{d}\binom{d}{w}\frac{w}{d}(d-1)^{d-w}=\sum_{w=1}^{d}\binom{d-1}{w-1}(d-1)^{d-w}=d^{\,d-1}.
\]
Hence the probability that some support is a spanning tree is at most
\begin{equation}\label{eq:Td}
T_{d}\ :=\ \frac{2^{d}-1}{g_{d}}\,d^{\,2d-2}.
\end{equation}
Since $g_{d}=2^{d^{2}}\prod_{i=1}^{d}(1-2^{-i})>\tfrac14 2^{d^{2}}$, we get $T_{d}<4\,d^{\,2d-2}2^{\,d-d^{2}}$.

\smallskip
\noindent\emph{Conclusion.} Both $\tfrac54 d^{2}2^{1-d}$ and $4d^{2d-2}2^{d-d^{2}}$ are decreasing for $d\ge 7$, and at $d=7$
\[
D_{7}+T_{7}\ \le\ \frac{245}{256}+\frac{4\cdot 7^{12}}{2^{42}}\ <\ 0.970\ <\ 1 .
\]
So for every $d\ge 7$ the union bound leaves positive probability that no nonzero covector has a disconnected or acyclic support.
\end{proof}

\begin{remark}\label{rem:margin}
The estimate \eqref{eq:Ddbound} is deliberately crude, and the margin at $d=7$ is larger than it looks: evaluating \eqref{eq:Dd} and \eqref{eq:Td} exactly gives
\[
D_{7}+T_{7}=0.80008\ldots,\qquad D_{8}+T_{8}=0.50503\ldots,\qquad D_{9}+T_{9}=0.31736\ldots,
\]
so no delicacy in the constants is being exploited. The threshold is nevertheless intrinsic to this first-moment argument rather than an artefact of the estimates: the exact value of $D_{6}+T_{6}$ is $1.41666\ldots>1$, so degrees $4,5,6$ genuinely require the certificates of the next section.
\end{remark}

\section{Exact certificates for \texorpdfstring{$4\le d\le 8$}{4<=d<=8}}\label{sec:certificates}

Lemma~\ref{lem:bad} leaves $d\in\{4,5,6\}$. We record verified matrices for these three degrees and, for redundancy, also for $d=7$ and $d=8$; the probabilistic argument is then needed only for $d\ge 9$, where the exact failure bound is below $0.32$ by Remark~\ref{rem:margin}. All entries are in $\FF_{2}$.
\[
T_{4}=\begin{pmatrix}
1&0&0&1\\ 1&1&1&0\\ 1&0&1&0\\ 0&1&0&1
\end{pmatrix},\qquad
T_{5}=\begin{pmatrix}
0&1&0&1&1\\ 0&0&1&0&0\\ 1&1&0&0&0\\ 1&0&1&0&0\\ 0&0&0&1&0
\end{pmatrix},\qquad
T_{6}=\begin{pmatrix}
0&1&0&0&0&0\\ 0&1&0&0&1&0\\ 1&0&0&0&0&1\\ 0&0&1&0&0&1\\ 0&0&1&0&0&0\\ 0&0&0&1&0&1
\end{pmatrix},
\]
\[
T_{7}=\begin{pmatrix}
0&1&0&0&1&1&0\\ 0&0&0&1&1&1&0\\ 1&0&0&1&1&0&0\\ 0&0&1&0&0&1&1\\ 0&0&0&0&0&1&0\\ 0&0&1&0&1&0&0\\ 0&0&1&0&0&0&0
\end{pmatrix},\qquad
T_{8}=\begin{pmatrix}
1&1&0&0&0&0&1&1\\ 1&0&1&0&1&1&1&1\\ 0&0&0&0&0&1&0&1\\ 0&0&0&0&1&0&0&1\\ 1&0&0&1&0&0&0&0\\ 1&0&0&0&0&0&0&0\\ 0&0&0&1&0&0&1&0\\ 0&1&0&0&1&1&0&0
\end{pmatrix}.
\]
Their characteristic polynomials are, respectively,
\[
x^{4}+x+1,\quad x^{5}+x^{2}+1,\quad x^{6}+x+1,\quad x^{7}+x+1,\quad x^{8}+x^{4}+x^{3}+x+1,
\]
all irreducible over $\FF_{2}$. Exact enumeration of all nonzero covectors $a$ gives the following certificate summary, where $\beta_{1}=|E|-|V|+1$ since all supports turn out to be connected.

\begin{center}
\begin{tabular}{cccccc}
\toprule
$d$ & $\#\{a\ne 0\}$ & $\min_{a}|E(\Gam(M_{a}))|$ & $\min_{a}\beta_{1}(\Gam(M_{a}))$ & \# disconnected & \# trees\\
\midrule
$4$ & $15$  & $8$  & $1$ & $0$ & $0$\\
$5$ & $31$  & $10$ & $1$ & $0$ & $0$\\
$6$ & $63$  & $12$ & $1$ & $0$ & $0$\\
$7$ & $127$ & $14$ & $1$ & $0$ & $0$\\
$8$ & $255$ & $23$ & $8$ & $0$ & $0$\\
\bottomrule
\end{tabular}
\end{center}

The verifier is reproduced in Appendix~\ref{app:code}. It uses only exact integer arithmetic and the Python standard library, recomputes each characteristic polynomial and its irreducibility by trial division, and independently checks all $491$ covectors; it runs in a fraction of a second.

\begin{remark}\label{rem:companion}
The choice of matrix, and not merely of characteristic polynomial, is essential, so the certificates above cannot be replaced by the obvious canonical choice. If $T$ is the companion matrix of an irreducible $f$ and $W$ is identified with $\FF_{2}[x]/(f)$ with its power basis, then \eqref{eq:label} becomes $\lambda(x_{i}y_{j})=\alpha^{i+j}$ for $\alpha$ a root of $f$, so $M_{a}(T)$ is the Hankel matrix $\bigl(\mathrm{Tr}(c\alpha^{i+j})\bigr)_{i,j}$ for a suitable $c\ne 0$. These matrices are always invertible, but their supports are not always connected: taking the lexicographically first irreducible polynomial of each degree, the companion matrix fails for every $d$ with $4\le d\le 12$, with between $5$ and $45$ disconnected supports in each case. By contrast, a random conjugate of the companion matrix succeeds with substantial probability already at $d=7$ (about $84\%$ over $50$ trials), which is what Lemma~\ref{lem:bad} makes precise.
\end{remark}

Combining Lemma~\ref{lem:bad} with the table gives the following.

\begin{proposition}\label{prop:base}
For every $d\ge 4$ there is an irreducible $T\in\GL_{d}(2)$ such that the labelling \eqref{eq:label} of $K_{d,d}$ has the property that $E_{a}$ is a connected spanning subgraph containing a cycle, for every nonzero $a\in W^{*}$.
\end{proposition}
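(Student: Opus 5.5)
The plan is to split into the two ranges already handled and glue them together. For $d\ge 7$ (or $d\ge 9$, if one prefers to rely on the certificates for $7$ and $8$), I would invoke Lemma~\ref{lem:bad}. It asserts that with positive probability a uniformly random $T$ in the conjugacy class $\mathcal{T}$ of a fixed monic irreducible polynomial of degree $d$ has $\Gam(M_{a}(T))$ connected and containing a cycle for every nonzero $a$. Positive probability means such a $T$ exists. Every element of $\mathcal{T}$ has irreducible characteristic polynomial, so $T$ is irreducible in the sense required. One small point to record first is that $\mathcal{T}$ is nonempty. This holds because monic irreducible polynomials of every degree $d$ exist over $\FF_2$ (for instance the minimal polynomial of a generator of $\FF_{2^d}^\times$), and the companion matrix lies in $\mathcal{T}$.

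For $4\le d\le 6$, and redundantly for $d=7,8$, I would take $T=T_d$ from Section~\ref{sec:certificates}. Each listed characteristic polynomial is irreducible. This is a finite check, done by trial division against the irreducible polynomials of degree at most $d/2$ as in the verifier. The table then records, for each nonzero $a$, that $\Gam(M_a(T_d))$ is connected (zero disconnected supports) and not a tree (zero trees, equivalently $\min\beta_1\ge 1$). A connected graph that is not a tree contains a cycle. The evidence for this range is the exhaustive enumeration over the $491$ covectors in Appendix~\ref{app:code}.

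Finally I would translate the matrix statement into the statement about $E_a$. By \eqref{eq:biadj}, $E_a=\Gam(M_a(T))$ as subgraphs of $K_{d,d}$. To see that $E_a$ is spanning, note that by Lemma~\ref{lem:irred} the matrix $M_a(T)$ is invertible, so it has no zero row or column, and hence every vertex of $K_{d,d}$ is incident with some edge of $E_a$. Connectivity and the cycle then carry over directly.

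I expect no real obstacle, since the substantive work lies in Lemma~\ref{lem:bad} and the certificates. The only step needing care is trusting the finite computation for $d\le 6$, where the first-moment bound genuinely fails (Remark~\ref{rem:margin}). There I would simply cite the exact verification rather than attempt a symbolic argument.
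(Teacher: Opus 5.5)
Your proposal is correct and is essentially the paper's own argument. The paper also obtains the proposition by combining Lemma~\ref{lem:bad} for $d\ge 7$ with the exact certificates $T_4,\dots,T_8$ of Section~\ref{sec:certificates} for the small degrees. Your extra remarks are harmless bookkeeping that the paper leaves implicit: the nonemptiness of $\mathcal{T}$, and spanning via Lemma~\ref{lem:irred}, which in fact already follows from connectivity of $\Gam(M_a(T))$ on all $2d$ vertices.
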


With Lemma~\ref{lem:criterion} and Lemma~\ref{lem:upper}, Proposition~\ref{prop:base} already yields Corollary~\ref{cor:kdd}: $m(K_{d,d})=2^{d}$ for every $d\ge 4$.

\section{Cyclic lifts}\label{sec:lifts}

It remains to produce infinitely many examples in each degree. We use the standard voltage-graph construction; see \cite{GrossTucker} for background. Given a graph $B$ with an arbitrary orientation of its edges and a map $\varphi$ from the oriented edges to a finite group $\Gamma$ (with reversed edges receiving inverse values), the \emph{derived cover} $B^{\varphi}$ has vertex set $V(B)\times\Gamma$ and an edge from $(u,g)$ to $(v,g\varphi(uv))$ for each oriented edge $uv$ of $B$ and each $g\in\Gamma$.

\begin{lemma}[Simultaneously connected cyclic lifts]\label{lem:lift}
Let $B$ be a finite graph with an edge labelling in $\FF_{2}^{d}$ such that every selected graph $B_{a}$, $a\ne 0$, is connected, spanning and contains a cycle. Then there are infinitely many finite covers $\widetilde{B}$ of $B$ in which every lifted selected graph $\widetilde{B}_{a}$ is connected and spanning. If $B$ is finite, simple, bipartite and $d$-regular, the covers may be chosen with all of these properties.
\end{lemma}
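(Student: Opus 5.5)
Take $\Gamma=\mathbb{Z}/p$ for a prime $p$ and choose the voltage assignment $\varphi\colon E(B)\to\FF_{p}$ uniformly at random, after fixing an orientation. The labelling of $\widetilde{B}$ is pulled back from $B$, so that the lifted selected graph is $\widetilde{B}_{a}=(B_{a})^{\varphi|_{B_a}}$.

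The key standard fact is the connectivity criterion for derived covers. If $G$ is connected, then $G^{\varphi}$ is connected if and only if the voltages of closed walks based at a fixed vertex generate $\Gamma$. Equivalently, fix a spanning tree $S$ of $G$ and normalise so that $S$ carries voltage $0$; the fundamental cycle of each non-tree edge $e$ then has net voltage $\varphi(e)$, up to the tree edges. Since $\mathbb{Z}/p$ is generated by any nonzero element, $G^{\varphi}$ is connected as soon as some cycle of $G$ has nonzero net voltage. Spanning is automatic, because every vertex $(v,g)$ of $\widetilde B$ lies in $\widetilde{B}_{a}$: the vertex $v$ has a $B_a$-edge, and that edge lifts to an edge at every $(v,g)$.

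For each nonzero $a$, pick one cycle $C_{a}\subseteq B_{a}$, which exists by hypothesis, and fix an orientation around it. Its net voltage $\sum_{e\in C_a}\pm\varphi(e)$ is a nonzero linear form in the independent uniform variables $\varphi(e)$, so it is uniform on $\FF_p$ and vanishes with probability exactly $1/p$. A union bound over the $2^{d}-1$ covectors shows that for $p>2^{d}-1$ there is some $\varphi$ with all $\widetilde{B}_{a}$ connected. Each such cover has $p|V(B)|$ vertices, so varying $p$ over infinitely many primes gives infinitely many covers, pairwise nonisomorphic by vertex count. Here the cycle hypothesis is exactly what is needed: a tree $B_a$ would lift to $p$ disjoint copies.

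The remaining properties come from the cover structure. The covering map is a local isomorphism, so $\widetilde B$ is $d$-regular. Lifting the bipartition $V(B)=L\cup R$ to $L\times\Gamma$ and $R\times\Gamma$ shows $\widetilde B$ is bipartite. For simplicity: loops cannot occur since $B$ has none. Parallel edges would require two distinct edges $uv$ of $B$ with the same endpoints, or a single edge whose lift joins $(u,g)$ to $(v,g')$ twice; the latter is impossible since each oriented edge contributes exactly one lift at each $(u,g)$. So simplicity of $B$ suffices. The main obstacle is only making the connectivity criterion precise, namely that the component of $(v_0,0)$ contains $\{v_0\}\times\langle\text{closed-walk voltages}\rangle$. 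I would prove this directly by lifting walks, rather than cite \cite{GrossTucker}.
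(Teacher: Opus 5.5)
Your proposal is correct and follows the paper's proof: independent uniform $\FF_{p}$-voltages with $p>2^{d}-1$, connectivity of each lifted $B_a$ because its cycle voltages generate $\FF_{p}$, per-covector failure probability at most $1/p$, a union bound, infinitely many primes, and preservation of simplicity, bipartiteness and regularity. The only difference is that you bound the failure probability using a single cycle of $B_a$, whereas the paper uses all $\beta_{1}(B_a)$ fundamental cycles to get the exact value $p^{-\beta_{1}}$; this does not affect the union bound.
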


\begin{proof}
Fix a prime $p>2^{d}-1$, orient the edges of $B$ arbitrarily, assign to each oriented edge an independent uniform voltage in $\FF_{p}$, and form the derived cover. Pull the labels back to all lifted edges, so that $\widetilde{B}_{a}$ is precisely the lift of $B_{a}$.

Fix $a\ne 0$ and a spanning tree of $B_{a}$, which exists since $B_{a}$ is connected and spanning. Conditionally on the voltages of the tree edges, the voltages of the $\beta_{1}(B_{a})$ fundamental cycles are independent and uniform in $\FF_{p}$, because each such cycle contains its own non-tree edge with coefficient $\pm1$. The lift of $B_{a}$ is connected if and only if these cycle voltages generate $\FF_{p}$; as $p$ is prime and $\beta_{1}(B_{a})\ge 1$ by hypothesis, failure has probability $p^{-\beta_{1}(B_{a})}\le p^{-1}$. A union bound over the at most $2^{d}-1$ nonzero covectors gives a positive probability that all lifts are simultaneously connected, since $(2^{d}-1)/p<1$.

A derived cover of a finite simple bipartite $d$-regular graph is again finite, simple, bipartite and $d$-regular: no loops arise because $B$ has none, and no multiple edges arise because distinct lifted edges joining the same pair of vertices would require parallel edges in $B$. Finally, there are infinitely many eligible primes.
\end{proof}

\begin{proof}[Proof of Theorem~\ref{thm:main}]
Let $d\ge 4$. Take the labelled base graph $B=K_{d,d}$ supplied by Proposition~\ref{prop:base} and a good voltage lift $\widetilde{B}$ given by Lemma~\ref{lem:lift} for a prime $p>2^{d}-1$. For each nonzero $a$, the selected graph in $\widetilde{B}$ is the lift of $B_{a}$ and is therefore connected and spanning, so by Lemma~\ref{lem:criterion} the $2^{d}$ sets $\widetilde{B}_{a}$ form a linear connectivity code and $m(\widetilde{B})\ge 2^{d}$. Since $\widetilde{B}$ is $d$-regular, Lemma~\ref{lem:upper} gives the matching upper bound, so $m(\widetilde{B})=2^{d}$. Taking one cover for each eligible prime produces graphs of $2dp$ vertices, hence infinitely many pairwise nonisomorphic examples, and $f(d)=2^{d}$.
\end{proof}

\begin{remark}
The hypothesis that each $B_{a}$ contains a cycle is essential and not merely convenient: every voltage lift of a tree is a disjoint union of copies of that tree, so a selected graph that is a spanning tree of the base can never lift to a connected graph. The tree count \eqref{eq:Td} in Lemma~\ref{lem:bad} was set up to eliminate exactly this obstruction, and the certificates of Section~\ref{sec:certificates} record $\min_{a}\beta_{1}\ge 1$ for the same reason.
\end{remark}

\section{Concluding remarks}\label{sec:concluding}

Corollary~\ref{cor:complete} determines $f(d)$ for every $d$, but several natural questions in the neighbourhood of Question~\ref{q:alon} remain open.

\begin{enumerate}
\item[(a)] \emph{Small-degree expanders.} Alon asks in \cite{AlonConn} whether $m(H)=2^{d}$ for every $d$-regular Ramanujan graph with $d\ge 4$. Our examples are cyclic lifts of $K_{d,d}$ and are far from Ramanujan, so this remains open; more generally, it would be interesting to determine the least $d_{0}$ for which the expander criterion of \cite{AlonConn} can be made to work, which is a different question from Question~\ref{q:alon} and is untouched here.
\item[(b)] \emph{Non-bipartite examples.} The construction produces bipartite graphs in every degree $d\ge 4$. Is there an equally uniform non-bipartite construction? A natural candidate would replace $K_{d,d}$ by a Cayley graph of $\FF_{2}^{d}$ with the labelling given by the connection set itself.
\item[(c)] \emph{Linear versus non-linear.} All the extremal codes known in this setting, including ours, are linear. Alon \cite{AlonConn} asks whether there are graphs $H$ for which non-linear connectivity codes beat linear ones; our construction sheds no light on this.
\item[(d)] \emph{Products of cycles.} Determining $m(C_{m}\times C_{n})$ for all $m,n$ remains open \cite{AlonConn,BGMW}, as does the complexity of computing or approximating $m(H)$ for a given input graph.
\end{enumerate}

\subsection*{Acknowledgements}
The computations in Section~\ref{sec:certificates} and Remarks~\ref{rem:margin} and~\ref{rem:companion} were carried out with the program in Appendix~\ref{app:code}.

\appendix

\section{Verifier}\label{app:code}

The following self-contained program recomputes, for each of the five matrices of Section~\ref{sec:certificates}, the characteristic polynomial over $\FF_{2}$ and its irreducibility, and then enumerates all nonzero covectors $a$, reporting the number of disconnected supports, the number of spanning-tree supports, and the minima of $|E|$ and $\beta_{1}$. All arithmetic is exact integer arithmetic; no external libraries are used.

\begin{lstlisting}[language=Python]
from itertools import permutations, product

def pmul(p, q):                      # product in F_2[x], polys as bitmasks
    r = 0
    while q:
        if q & 1: r ^= p
        p <<= 1; q >>= 1
    return r

def charpoly(T):                     # det(xI + T) over F_2, Leibniz formula
    d = len(T)
    M = [[T[i][j] ^ (2 if i == j else 0) for j in range(d)] for i in range(d)]
    tot = 0
    for perm in permutations(range(d)):
        t = 1
        for i in range(d):
            t = pmul(t, M[i][perm[i]])
            if t == 0: break
        tot ^= t
    return tot

def irreducible(p):                  # trial division by all polys of degree <= deg(p)/2
    d = p.bit_length() - 1
    if d < 1 or p & 1 == 0: return False
    for dd in range(1, d // 2 + 1):
        for c in range(1 << dd, 1 << (dd + 1)):
            r = p
            while r.bit_length() >= c.bit_length():
                r ^= c << (r.bit_length() - c.bit_length())
            if r == 0: return False
    return True

class DSU:
    def __init__(s, n): s.p = list(range(n))
    def find(s, x):
        while s.p[x] != x: s.p[x] = s.p[s.p[x]]; x = s.p[x]
        return x
    def union(s, x, y):
        x, y = s.find(x), s.find(y)
        if x != y: s.p[x] = y

def analyze(T):
    d = len(T)
    minE, minB, disc, tree = 10**9, 10**9, 0, 0
    for a in product([0, 1], repeat=d):
        if not any(a): continue
        rows, cur = [], a
        for _ in range(d):
            rows.append(cur)
            cur = tuple(sum(cur[i] * T[i][j] for i in range(d)) % 2 for j in range(d))
        ds, E = DSU(2 * d), 0
        for i in range(d):
            for j in range(d):
                if rows[i][j]: ds.union(i, d + j); E += 1
        comps = len({ds.find(v) for v in range(2 * d)})
        if comps > 1: disc += 1
        b1 = E - 2 * d + comps
        if comps == 1 and b1 == 0: tree += 1
        minE, minB = min(minE, E), min(minB, b1)
    return minE, minB, disc, tree

MATRICES = {
 4: [[1,0,0,1],[1,1,1,0],[1,0,1,0],[0,1,0,1]],
 5: [[0,1,0,1,1],[0,0,1,0,0],[1,1,0,0,0],[1,0,1,0,0],[0,0,0,1,0]],
 6: [[0,1,0,0,0,0],[0,1,0,0,1,0],[1,0,0,0,0,1],[0,0,1,0,0,1],
     [0,0,1,0,0,0],[0,0,0,1,0,1]],
 7: [[0,1,0,0,1,1,0],[0,0,0,1,1,1,0],[1,0,0,1,1,0,0],[0,0,1,0,0,1,1],
     [0,0,0,0,0,1,0],[0,0,1,0,1,0,0],[0,0,1,0,0,0,0]],
 8: [[1,1,0,0,0,0,1,1],[1,0,1,0,1,1,1,1],[0,0,0,0,0,1,0,1],[0,0,0,0,1,0,0,1],
     [1,0,0,1,0,0,0,0],[1,0,0,0,0,0,0,0],[0,0,0,1,0,0,1,0],[0,1,0,0,1,1,0,0]],
}

for d, T in sorted(MATRICES.items()):
    cp = charpoly(T)
    assert cp.bit_length() - 1 == d and irreducible(cp)
    minE, minB, disc, tree = analyze(T)
    assert disc == 0 and tree == 0 and minB >= 1
    print(f"d={d}  charpoly={cp:b}  #a={2**d-1}  min|E|={minE}  "
          f"min beta1={minB}  disconnected={disc}  trees={tree}")
\end{lstlisting}

\end{document}